\documentclass[11pt,reqno]{amsart}

\usepackage{amsmath, amstext, amsfonts}
\usepackage[left=3cm,top=2.5cm,right=3cm,bottom=2.5cm]{geometry}
\usepackage[colorlinks,bookmarks]{hyperref}

\newtheorem{thm}{Theorem}

\DeclareMathOperator{\diam}{diam}

\DeclareMathOperator*{\DSpec}{DSpec}

\title{On the minimum number of eigenvalues of matrices associated with cographs}
\author[Allem]{Luiz Emilio Allem}
\address{Instituto de Matem\'{a}tica e Estat\'{i}stica, Universidade Federal do Rio Grande do Sul, Brazil}
\email{emilio.allem@ufrgs.br}

\author[F\"{u}rer]{Martin F\"{u}rer}
\address{Department of Computer Science and Engineering, Pennsylvania State University, USA}  
\email{fhs@psu.edu}

\author[Hoppen]{Carlos Hoppen}
\address{Instituto de Matem\'atica e Estat\'{i}stica, Universidade Federal do Rio Grande do Sul, Brazil}
\email{choppen@ufrgs.br}

\author[Sibemberg]{Lucas Siviero Sibemberg}
\address{Instituto de Matem\'atica e Estat\'{i}stica, Universidade Federal do Rio Grande do Sul, Brazil}
\email{lucas.siviero@ufrgs.br}

\author[Trevisan]{Vilmar Trevisan}
\address{Instituto de Matem\'{a}tica e Estat\'{i}stica, Universidade Federal do Rio Grande do Sul, Brazil}
\email{trevisan@mat.ufrgs.br}

\begin{document}

\begin{abstract}
A symmetric matrix $M=(m_{ij}) \in \mathbb{R}^{n \times n}$ is said to be associated with an $n$-vertex graph $G=(V,E)$ with vertex set $\{v_1,\ldots,v_n\}$ if, for every $i \neq j$, we have $m_{ij} \neq 0$ if and only if $\{v_i,v_j\}\in E$. We prove that, for every cograph $G$, there is a matrix $M$ associated with $G$ for which the number of distinct eigenvalues is at most 4.
\end{abstract}

\maketitle

\section{Introduction}

Let $G$ be a simple graph with vertex set $V=\{1,2,\ldots,n\}$ and edge set $E$. We associate a collection $S(G)$ of real $n\times n$ symmetric matrices with $G$:
$$S(G)=\left\{M \in \mathbb{R}^{n \times n} \colon M=M^{T} \mbox{ and } \left(\forall~i\neq j \right) \left(a_{ij}\neq 0 \Longleftrightarrow \{i,j\}\in E\right)\right\}.$$
This means that the off-diagonal nonzero entries of a matrix $M \in S(G)$ are precisely the entries $ij$ for which there is an edge $\{i,j\}\in E$, while there is no constraint on entries on the main diagonal. This short note is concerned with the parameter $q(G)$, the minimum number of distinct eigenvalues in a matrix in $S(G)$.  Formally, if $\DSpec(M)$ denotes the minimum number of eigenvalues in a matrix $M$, we have
$$q(G)=\mbox{min}\{|\DSpec(M)|\colon M\in S(G)\}.$$

For general results about $q(G)$, we refer to Hogben, Lin and Shader~\cite{hogben2022inverse}.

The parameter $q(G)$ has been investigated extensively for several classes of graphs. The interest in the spectrum for matrices associated with trees stands out. We should mention early work of Parter~\cite{Parter60} and Wiener~\cite{WIENER1984}, and the systematic study initiated by Leal Duarte, Johnson and their collaborators, see for example~\cite{ParterWiener03,JOHNSON20027,DUARTE1989173,leal2002minimum,2018eigenvalues}. As it turns out, for any tree $T$, $q(T) \geq \diam(T)+1$, where $\diam(T)$ is the diameter of the tree, that is, the number of edges on a longest path in $T$. There are instances for which this inequality is strict, but equality holds for large families of trees, which are therefore known as diminimal trees. For instance, Leal Duarte and Johnson~\cite{leal2002minimum} have shown that all trees of diameter at most 5 are diminimal. Moreover, a tree $T$ is linear if all vertices of degree at least three lie on a common path. Johnson and Wakhare~\cite{johnson2022inverse} have shown that all linear trees are diminimal. Allem et al~\cite{allem2023diminimal} described infinite families of diminimal trees of any fixed diameter, many of which are not linear. 

The current paper is concerned with \emph{complement reducible graphs}, commonly known as \emph{cographs}. These are the graphs that do not contain the 4-vertex path $P_4$ as an induced subgraph, which have been considered in various contexts and have been rediscovered multiple times. For a historical account and the terminology associated with cographs, we refer the reader to a  seminal paper of Corneil, Lerchs, and Burlingham~\cite{Corneil1981}. Recently, Lazzarin, Tura, and three of the current authors~\cite{ALLEM202532} have considered threshold graphs, the subclass of cographs that contains all graphs with no induced copy of $P_4$, $C_4$ and $2K_2$. They proved the following result:
\begin{thm}\label{main_theorem_threshold}
If $G$ is a threshold graph and $\lambda \neq 0$ is a real number, then there is a matrix $M\in S(G)$ such that $\DSpec(M)\subseteq\{-\lambda,0,\lambda,2\lambda\}$. In particular, $q(G) \leq 4$.
\end{thm}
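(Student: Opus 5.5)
Threshold graphs are built from a single vertex by repeatedly adding either an isolated vertex or a dominating vertex. I will prove the statement by induction along this construction sequence, with a strengthened hypothesis that controls eigenvectors as well as eigenvalues. By scaling it suffices to treat $\lambda=1$: if $M$ works for $\lambda=1$ then $\lambda M$ works in general, since $\lambda M\in S(G)$ for $\lambda\neq0$.

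Adding an isolated vertex is harmless. If $M\in S(G)$ has spectrum in $\{-1,0,1,2\}$, then $M\oplus(0)\in S(G\cup K_1)$ has the same spectrum together with $0$. The same direct-sum argument handles disjoint unions in general, which is why a \emph{fixed} target set is the right inductive statement.

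The key case is adding a dominating vertex, $G'=G\vee K_1$. Take the candidate
\[
M'=\begin{pmatrix} M & b\\ b^T & d\end{pmatrix},
\]
where $b$ has all entries nonzero. By Cauchy interlacing, each eigenvalue of $M$ of multiplicity $m$ survives in $M'$ with multiplicity at least $m-1$. The new eigenvalues are the roots of the secular equation
\[
d-x=\sum_\mu \frac{\|P_\mu b\|^2}{\mu-x},
\]
where $P_\mu$ is the projection onto the $\mu$-eigenspace of $M$. To keep every eigenvalue in $\{-1,0,1,2\}$, I would choose $b$ so that $P_\mu b=0$ for all but a few $\mu$, and then choose $d$ so that the roots of the reduced equation also lie in the target set.

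Concretely, the strengthened hypothesis I would carry is this: there exist $M\in S(G)$ with spectrum in $\{-1,0,1,2\}$ and an eigenvector $u$ of $M$ for a fixed eigenvalue (say $1$) with \emph{all entries nonzero}. Taking $b=cu$ reduces the secular equation to
\[
(d-x)(1-x)=c^2\|u\|^2 .
\]
I then need both roots in the target set. The pair $\{0,2\}$ works with $d=1$ and $c^2\|u\|^2=1$; the pair $\{-1,2\}$ works with $d=0$ and $c^2\|u\|^2=2$. On the complement of $u$, $M'$ acts like $M$, so the remaining spectrum is unchanged.

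The step I expect to be the main obstacle is re-establishing the invariant in $G'$: $M'$ needs an eigenvector with all entries nonzero for the designated eigenvalue. An eigenvector of the $2\times2$ block is $(\alpha u,\beta)$ with $\alpha\beta\neq0$, so it has nowhere-zero entries, but its eigenvalue is now $0$ or $2$ rather than $1$. So I would set up the invariant in a form that works for either designated eigenvalue, for instance by allowing the designated eigenvalue to be any $\mu\in\{0,1\}$ and solving $(d-x)(\mu-x)=c^2\|u\|^2$ with roots chosen accordingly. I would also translate by a multiple of the identity when needed.

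The isolated-vertex step breaks the invariant: the new coordinate of $u$ is $0$. To fix this, I would instead carry a vector combining $u$ with an eigenvector of the isolated block, which requires the same eigenvalue on both sides. For this reason I would arrange the invariant so that the designated eigenvalue is always $0$, which the isolated vertex $(0)$ also has. The plan is then to check the two-case induction with designated eigenvalue $0$ and target set $\{-1,0,1,2\}$, adjusting the diagonal entry $d$ and using shifts, and to verify that the roots can always be placed in the set.
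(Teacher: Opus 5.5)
Your proposal does not reach a proof. It stops at a plan, and the invariant you finally commit to (a nowhere-zero eigenvector for the eigenvalue $0$) cannot survive the dominating-vertex step.

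Suppose $Mu=\mu u$, $b=cu$, $\hat u=u/\|u\|$ and $\beta=c\|u\|\neq 0$. Then the new eigenvalues are those of $\begin{pmatrix}\mu&\beta\\ \beta&d\end{pmatrix}$, and its roots satisfy $(r_1-\mu)(r_2-\mu)=-\beta^2<0$. So $r_1<\mu<r_2$, and $\mu$ is never a root. Hence every $\mu$-eigenvector of $M'$ lies in $u^{\perp}\oplus 0$ and vanishes at the new vertex.

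For $\mu=0$ no other choice of $b$ with $b^Tu\neq 0$ helps either. A $0$-eigenvector $(w,t)$ with $t\neq 0$ forces $b=-Mw/t\in\mathrm{range}(M)\subseteq u^{\perp}$. So the designated eigenvalue must change at every dominating step, and "always $0$" is impossible. Shifting by a multiple of the identity does not rescue this, because $\{-1,0,1,2\}$ is not shift-invariant.

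The repair is already in your text; you discarded it for a wrong reason. Carry the invariant: there is $M\in S(G)$ with spectrum in $\{-1,0,1,2\}$ and a nowhere-zero eigenvector $u$ for some $\mu\in\{0,1\}$.

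For an isolated vertex, use $M\oplus(\mu)$ instead of $M\oplus(0)$. Diagonal entries are unconstrained in $S(G)$, and $(u,1)$ is a nowhere-zero $\mu$-eigenvector.

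For a dominating vertex, take $b=\hat u$ and let the designated eigenvalue alternate:
\begin{itemize}
\item If $\mu=0$, set $d=0$. The block has eigenvalues $\pm 1$, and $(\hat u,1)$ is a nowhere-zero eigenvector for $1$.
\item If $\mu=1$, set $d=1$. The block has eigenvalues $0$ and $2$, and $(\hat u,-1)$ is a nowhere-zero eigenvector for $0$.
\end{itemize}
The remaining spectrum is that of $M$ restricted to $u^{\perp}$, and the base case is $M=(0)$, $u=(1)$, so the induction closes. Scaling by $\lambda$ then gives the statement.

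With this fix your argument is a valid alternative to the paper's route. The paper obtains the statement from its cograph theorem, since threshold graphs are cographs. It adds twins one at a time, splits the row of $v$ by $1/\sqrt2$, and uses the locally supported new eigenvector $(e_v-e_{v'})/\sqrt2$. The only auxiliary invariant there is that diagonal entries lie in $\{0,\lambda\}$, and no global nowhere-zero eigenvector is needed. Your route instead needs the isolated/dominating construction specific to threshold graphs.
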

As it turns out, the upper bound is tight, in the sense that there exists a family of threshold graphs such that all $G$ in this family satisfy $q(G)=4$, see~\cite[Theorem 6.4]{fallat2022minimum}.

In this note, we show that the same conclusion still holds for the much larger class of cographs. 
\begin{thm}\label{main:thm}
If $G$ is a cograph and $\lambda \neq 0$ is a real number, then there is a matrix $M\in S(G)$ such that $\DSpec(M)\subseteq\{-\lambda,0,\lambda,2\lambda\}$. 
\end{thm}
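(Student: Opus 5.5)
We argue by induction on the standard recursive description of cographs: every cograph on at least two vertices is either a disjoint union $G_1\cup G_2$ or a join $G_1\vee G_2$ of two smaller cographs. Since scaling $M$ by $\lambda$ scales its spectrum, it suffices to treat $\lambda=1$, that is, to find $M\in S(G)$ with $\DSpec(M)\subseteq\{-1,0,1,2\}$. The statement alone does not handle joins: an arbitrary block $B$ off the diagonal destroys control of the spectrum. So we strengthen the induction hypothesis as follows.

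\emph{Claim.} For every cograph $G$ and every $a\in\{0,1\}$ there is $M\in S(G)$ with $\DSpec(M)\subseteq\{-1,0,1,2\}$ such that $a$ is an eigenvalue of $M$ with a \emph{nowhere-zero} unit eigenvector $u$.

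The base case $K_1$ is immediate, using the $1\times 1$ matrices $[0]$ and $[1]$.

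For a disjoint union $G_1\cup G_2$ and a target $a$, we take the matrices $A,C$ given by induction for the same $a$, with nowhere-zero eigenvectors $u,w$. We then use $A\oplus C$ and the vector $(u,w)/\sqrt2$. The spectrum is the union of the two spectra, and the vector is a nowhere-zero eigenvector for $a$.

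For a join $G_1\vee G_2$, we choose $A\in S(G_1)$ and $C\in S(G_2)$ by induction with eigenvalues $a_1,a_2$ and nowhere-zero unit eigenvectors $u,w$, and set
\[
M=\begin{pmatrix} A & t\,uw^{T}\\ t\,wu^{T} & C\end{pmatrix},\qquad t\neq 0 .
\]
Since every entry of $uw^T$ is nonzero, $M\in S(G_1\vee G_2)$. The subspace $W=\mathrm{span}\{(u,0),(0,w)\}$ is $M$-invariant, and on this basis $M$ acts by $\begin{pmatrix}a_1&t\\ t&a_2\end{pmatrix}$. On $W^{\perp}=(u^{\perp}\oplus 0)\oplus(0\oplus w^{\perp})$, $M$ acts as $A|_{u^\perp}\oplus C|_{w^\perp}$, because $uw^T$ kills $w^\perp$ and $wu^T$ kills $u^\perp$. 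Those eigenvalues are inherited, hence lie in $\{-1,0,1,2\}$. So everything reduces to the $2\times2$ block, and an eigenvector $(x,y)$ of that block with $xy\neq0$ yields the nowhere-zero eigenvector $(xu,yw)$ of $M$. The two targets are handled as follows.

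\textbf{Target $a=1$.} Take $a_1=a_2=0$ and $t=1$. The block $\begin{pmatrix}0&1\\1&0\end{pmatrix}$ has eigenvalues $\pm1$, and the eigenvector for $1$ is $(1,1)$.

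\textbf{Target $a=0$.} Take $a_1=a_2=1$ and $t=1$. The block $\begin{pmatrix}1&1\\1&1\end{pmatrix}$ has eigenvalues $0,2$, and the eigenvector for $0$ is $(1,-1)$.

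In both cases all new eigenvalues lie in $\{-1,0,1,2\}$, and the required nowhere-zero eigenvector exists, which completes the induction.

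The main obstacle is the join step, specifically finding an invariant that survives it. The idea that makes the join work is the rank-one coupling along eigenvectors. The subtle point is that the nowhere-zero eigenvector is consumed by the coupling, so the hypothesis must be regenerated for \emph{both} values $a\in\{0,1\}$. This is why the claim quantifies over $a$. The case analysis above confirms that each target value is produced from suitable choices on the two sides.
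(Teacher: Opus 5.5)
Your proof is correct, and it takes a genuinely different route from the paper's. The paper does not use the union/join decomposition directly. Instead it inducts by adding one twin at a time (Theorem~\ref{thm:basic}), with the strengthened hypothesis that every diagonal entry of $M$ lies in $\{0,\lambda\}$. To add a twin $v'$ of $v$, it splits the row of $v$ in $M^\ast$, dividing the off-diagonal entries by $\sqrt2$. It then chooses $m_{vv}$ and $m_{vv'}$ with $m_{vv}+m_{vv'}=m^\ast_{vv}$, where $m_{vv'}=0$ exactly for false twins. With this choice every eigenvector of $M^\ast$ lifts to an eigenvector of $M$ by spreading its $v$-coordinate evenly over $v$ and $v'$. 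The only new eigenvector is $(e_v-e_{v'})/\sqrt2$, with eigenvalue $m_{vv}-m_{vv'}$, and the four cases make this equal to $0$, $\lambda$, $2\lambda$ and $-\lambda$ respectively.

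Your invariant is spectral rather than entrywise: nowhere-zero eigenvectors for both $0$ and $1$. This lets you handle an entire join in one step through the rank-one coupling $t\,uw^{T}$, and it confines all new eigenvalues to the block $\begin{pmatrix}a_1&t\\t&a_2\end{pmatrix}$.

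What the paper's approach buys is a purely local invariant. It needs only one matrix per graph, is maintained by a single-vertex modification, and is verified by direct row computations. What yours buys is independence from the twin characterization, plus a clean, reusable join lemma. Its price is carrying two realizations per cograph, one for each target $a\in\{0,1\}$. Incidentally, your matrices also have every diagonal entry in $\{0,1\}$, since the join step never alters the diagonal blocks.
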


\section{Cographs}

Cographs may be defined in several different ways, one of them being through \emph{(disjoint) unions} and \emph{joins}. Given graphs $G_1=(V_1,E_1)$ and $G_2=(V_2,E_2)$ whose vertex sets are disjoint, the union $G_1 \cup G_2$ is the graph with vertex set $V=V_1 \cup V_2$ and edge set $E=E_1 \cup E_2$.  This can be naturally extended to the union $G_1 \cup\cdots \cup G_k$ of a finite family of mutually vertex-disjoint graphs $G_1=(V_1,E_1),\ldots,G_k=(V_k,E_k)$. The join $G_1 \oplus G_2$ is the graph with vertex set $V=V_1 \cup V_2$ and edge set $E=E_1 \cup E_2 \cup\{\{v_i,v_j\} \colon v_i \in V_1,~v_j \in V_2\}$, which can again be naturally extended to the join $G_1 \oplus \cdots \oplus G_k$ of a finite family of mutually vertex-disjoint graphs. With these operations, one may define the class $\mathcal{C}$ of cographs in a recursive way:
\begin{itemize}
\item[(a)]~$K_1 \in \mathcal{C}$.

\item[(b)]~If $G_1$ and $G_2$ lie in $\mathcal{D}$, then $G_1 \cup G_2 \in \mathcal{C}$.

\item[(c)]~If $G_1$ and $G_2$ lie in $\mathcal{D}$, then $G_1 \oplus G_2 \in \mathcal{C}$.
\end{itemize}
This recursive definition allows one to represent every cograph with vertex set $[n]$ as a rooted tree whose \emph{nodes} consist of $n$ leaves labeled $1$ through $n$ and internal vertices that carry either the label ``$\cup$'' for union or ``$\oplus$'' for join. Given such a tree $T$, we construct the corresponding cograph $G_T$ as follows: arbitrarily order the nodes of $T$ as $1,\ldots,m$ bottom-up. For $i$ from 1 to $m$, process node $i$ as follows. If it is a leaf labeled $j$, produce a singleton whose vertex is labeled $j$. If it is an internal node labeled $\cup$, take the union of the graphs produced by its children. If it is an internal node labeled $\oplus$, take the join of the graphs produced by its children. A tree $T$ associated with a graph $G=G_T$ will be called a \emph{cotree} of $G$. A cotree contains a lot of information about the corresponding cograph. For instance, fix distinct vertices $u,v \in V(G)$ and let $x$ be the lowest common ancestor of the leaves corresponding to $u$ and $v$ in $T$. If $x$ has label $\oplus$, then $u$ and $v$ are adjacent; if $x$ has label $\cup$, then they are not. In particular, the cotree $\overline{T}$ obtained from $T$ by switching labels $\oplus$ and $\cup$ produces the complement of $G$, that is, $G_{\overline{T}}=\overline{G}$.

We say that a cotree is in \emph{normalized form} if every internal node has at least two children and has a label that differs from the label of its parent. In other words, the children of nodes labeled $\cup$ are leaves or nodes labeled $\oplus$, while the children of nodes labeled $\oplus$ are leaves or nodes labeled $\cup$. If the root of such a tree $T$ is labeled $\cup$, then $G_T$ is disconnected; otherwise, $G_T$ is connected. While the same graph may be generated by different cotrees, there is a single cotree in normalized form associated with each graph.

Another characterization of cographs may be obtained as follows. Two vertices $u$ and $v$ in a graph $G$ are {\em false twins} (or \emph{duplicates}) if their neighborhoods $N(u)$ and $N(v)$ are the same, that is, if they are adjacent to exactly the same vertices. The vertices $u$ and $v$ are {\em true twins} (or \emph{coduplicates}) if they are adjacent and they are duplicates in the subgraph of $G$ obtained by deleting the edge $\{u,v\}$. It turns out that $u$ and $v$ of a cograph $G$ are twins if and only if they share the same parent node $x$ in the cotree $T$ in normalized form associated with $G$. Moreover, if $x = \cup$, they are false twins. If $x = \oplus$ they are true twins. From this, we may easily prove that cographs are precisely the graphs that may be constructed from a singleton by a sequence of duplications and co-duplications.
\begin{thm}\label{thm:basic}
An $n$-vertex graph $G$ is a cograph if and only if there is a graph sequence $G_1,G_2,\ldots, G_n$ such that $G_1$ has order 1, $G_n=G$ and, for $i \in \{1,\ldots,n-1\}$, $G_{i+1}$ is obtained from $G_i$ by the addition of a twin of a vertex $u \in V(G_i)$.
\end{thm}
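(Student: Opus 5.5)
We prove both directions by induction on $n$, using the normalized cotree and the twin characterization stated just before the theorem.

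For the ``only if'' direction, let $G$ be a cograph with $n\ge 2$ vertices and let $T$ be its normalized cotree. The first step is to find a pair of twins. Choose an internal node $x$ of maximal depth. All children of $x$ are then leaves, and $x$ has at least two of them, say $u$ and $v$. Since $u$ and $v$ share the parent $x$, the fact recalled above tells us they are twins: false twins if $x$ is labeled $\cup$, true twins if $x$ is labeled $\oplus$. We can also verify this directly. For any third vertex $w$, the lowest common ancestor of $w$ and $u$ coincides with that of $w$ and $v$, because the path from $w$ reaches $x$'s subtree only through $x$ or above.

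Next, delete $v$. Induced subgraphs of cographs are cographs, since $P_4$-freeness is hereditary. Alternatively, we can delete the leaf $v$ from $T$ and contract $x$ if it is left with a single child; this produces a cotree of $G-v$. By induction there is a sequence $G_1,\ldots,G_{n-1}=G-v$ with the required property. We extend it by $G_n=G$, which is obtained from $G-v$ by adding $v$ as a twin of $u$.

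For the ``if'' direction, it suffices to show that adding a twin to a cograph yields a cograph. The cotree makes this easy. Replace the leaf $u$ in a cotree of $G_i$ by an internal node labeled $\cup$ (false twin) or $\oplus$ (true twin), and give it the two leaf children $u$ and $u'$. For $u'$ and any other vertex $w$, the lowest common ancestor is the same node as for $u$ and $w$, so $u'$ has exactly the adjacencies of $u$ outside $\{u,u'\}$. The label of the new node decides whether $u$ and $u'$ are adjacent. The new tree is a cotree, so $G_{i+1}$ is a cograph. Starting from $K_1$ and applying induction gives that $G_n$ is a cograph.

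The only real obstacle is the existence of a twin pair, and choosing a deepest internal node resolves it. Everything else is routine bookkeeping with lowest common ancestors.
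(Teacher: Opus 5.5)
Your proposal is correct and follows essentially the route the paper intends. The paper only sketches this result, citing the fact that twins in a cograph are exactly the leaves sharing a parent in the normalized cotree. You supply the missing details: a deepest internal node gives two sibling leaves that are twins, deleting one of them still leaves a cotree, and conversely replacing a leaf by a two-leaf $\cup$ or $\oplus$ node realizes the addition of a false or true twin.
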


\section{The minimum number of eigenvalues of cographs}

In this section, we prove that $q(G) \leq 4$ for every cograph $G$. This may be done by induction on the number $n$ of vertices of $G$. We shall prove a slightly stronger result.
\begin{thm}\label{thm:main}
Given a real number $\lambda \neq 0$ and a cograph $G$, there is a matrix $M \in \mathcal{S}(G)$ such that $q(M) \subseteq \{-\lambda,0,\lambda,2\lambda\}$ with the additional property that every element in the diagonal of $M$ belongs to $\{0,\lambda\}$.
\end{thm}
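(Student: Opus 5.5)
The plan is to argue by induction on $n$, using the twin-addition characterization of cographs in Theorem~\ref{thm:basic}. For $n=1$ we take $M=(0)$ or $M=(\lambda)$; both have spectrum and diagonal inside the required sets. For the inductive step, let $G_{i+1}$ be obtained from $G_i$ by adding a twin $v$ of a vertex $u$. Let $M\in\mathcal{S}(G_i)$ have spectrum in $\{-\lambda,0,\lambda,2\lambda\}$ and diagonal in $\{0,\lambda\}$, and write $d=m_{uu}\in\{0,\lambda\}$. Construct the new matrix as an orthogonal similarity of a direct sum. Set $N=M\oplus(c)$ for a scalar $c$ chosen from $\{-\lambda,0,\lambda,2\lambda\}$, and let $Q$ be the Givens rotation by an angle $\theta$ in the coordinate plane spanned by $e_u$ and $e_v$. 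Then $M'=QNQ^{T}$ has spectrum $\DSpec(M)\cup\{c\}\subseteq\{-\lambda,0,\lambda,2\lambda\}$.

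Next compute the entries of $M'$. For $w\neq u,v$, entry $(w,w')$ is unchanged, $m'_{wu}=\cos\theta\, m_{wu}$ and $m'_{wv}=\sin\theta\, m_{wu}$. Hence whenever $\theta\notin\frac{\pi}{2}\mathbb{Z}$, the new vertex $v$ has exactly the neighbours of $u$ outside $\{u,v\}$, and the pattern on $G_i$ is preserved. The remaining entries are
\[
m'_{uu}=d\cos^2\theta+c\sin^2\theta,\qquad m'_{vv}=d\sin^2\theta+c\cos^2\theta,\qquad m'_{uv}=\cos\theta\sin\theta\,(d-c).
\]

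Now split into the two kinds of twins. If $v$ is a false twin of $u$, take $c=d$ (any admissible $\theta$, e.g.\ $\theta=\pi/4$). Then $m'_{uv}=0$, both new diagonal entries equal $d\in\{0,\lambda\}$, and the added eigenvalue $d$ is allowed. If $v$ is a true twin, we need $c\neq d$; take $\theta=\pi/4$, so both new diagonal entries equal $(d+c)/2$ and $m'_{uv}=(d-c)/2\neq 0$. When $d=0$ choose $c=2\lambda$, giving diagonal entries $\lambda$. When $d=\lambda$ choose $c=-\lambda$, giving diagonal entries $0$. In both cases $c$ lies in the allowed spectrum set and the diagonal stays in $\{0,\lambda\}$; the entry $m'_{uu}$ may change, but that is permitted since the diagonal is unconstrained in $S(G)$.

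The only delicate point is this true-twin case. We must simultaneously make the $(u,v)$ entry nonzero, keep the added eigenvalue $c$ in $\{-\lambda,0,\lambda,2\lambda\}$, and keep the diagonal in $\{0,\lambda\}$ so that the induction hypothesis propagates. This is exactly why the strengthened diagonal condition is included in the statement, and the computation above shows the choices $c=2\lambda$ and $c=-\lambda$ make everything fit. With $M'\in\mathcal{S}(G_{i+1})$ satisfying both properties, induction along the sequence of Theorem~\ref{thm:basic} completes the proof.
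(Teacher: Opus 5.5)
Your proposal is correct and is essentially the paper's proof. With $\theta=\pi/4$, your matrix $Q(M\oplus(c))Q^{T}$ reproduces exactly the paper's four matrices: $c=d$ for false twins, and for true twins $c=2\lambda$ when $d=0$ and $c=-\lambda$ when $d=\lambda$. Likewise, $Q$ applied to the old eigenvectors padded by $0$, together with $Qe_v$, gives exactly the paper's basis $x_1,\dots,x_n$. The only difference is presentation: your orthogonal-similarity framing makes the spectrum claim immediate, whereas the paper checks the eigenvector equations entry by entry in each case.
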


\begin{proof}
Let $G=(V,E)$ be an $n$-vertex cograph. Our proof proceeds by induction on the number of vertices. If $n=1$, the matrix $M=[0]$ clearly satisfies the required conditions. So, assume that $n>1$ and that the result holds for cographs with fewer vertices.

By Theorem~\ref{thm:basic}, $G$ contains a pair of twins $v,v'$. Let $G^\ast=G-v'$ be the cograph obtained by removing vertex $v'$, and let $M^\ast=(m^\ast_{ij})$ be a matrix associated with $G^\ast$ that satisfies the conditions of Theorem~\ref{thm:main}. Let $x_1^\ast,\ldots,x_{n-1}^\ast$ be an orthonormal basis of $\mathbb{R}^{n-1}$ given by eigenvectors on $M^\ast$. Our goal is to define an orthonormal basis $x_1,\ldots,x_{n}$ of $\mathbb{R}^n$ whose elements are eigenvectors associated with eigenvalues in  $\{-\lambda,0,\lambda,2\lambda\}$ with respect to a matrix $M \in \mathcal{S}(G)$. Without loss of generality, assume that the vertex set $v_1,\ldots,v_n$ of $G$ is ordered so that $v_{n-1}=v$ and $v_n=v'$. 

As it turns out, using the notation $x_i(u)$ to denote the component of $x_i$ associated with a vertex $u$, we may consider the following vectors in $\mathbb{R}^n$:
\begin{eqnarray}\label{vec:x}
&&x_1=\left[ 
\begin{array}{c}
x_1^\ast(v_1)\\
\vdots\\
x_1^\ast(v_{n-2})\\
x_1^\ast(v)/\sqrt{2}\\
x_1^\ast(v)/\sqrt{2}
\end{array}\right],\ldots,~x_{n-1}=\left[ 
\begin{array}{c}
x_{n-1}^\ast(v_1)\\
\vdots\\
x_{n-1}^\ast(v_{n-2})\\
x_{n-1}^\ast(v)/\sqrt{2}\\
x_{n-1}^\ast(v)/\sqrt{2}
\end{array}\right],
~x_n=\left[ 
\begin{array}{c}
0\\
\vdots\\
0\\
1/\sqrt{2}\\
-1/\sqrt{2}
\end{array}\right].
\end{eqnarray}
The fact that this is an orthonormal basis of $\mathbb{R}^n$ follows immediately from our assumption $x_1^\ast,\ldots,x_{n-1}^\ast$. 

To construct $M=(m_{uw})$, we consider four cases.

\vspace{5pt}

\noindent \textbf{Case 1:} $v$ and $v'$ are false twins and $m^*_{vv}=0$

\vspace{5pt}

Define $M=(m_{uw})$ as follows:
\begin{equation}\label{case1}
\begin{array}{ll}
m_{uw}=m^*_{uw}, & \textrm{ if }\{u,w\} \cap \{v,v'\}=\emptyset,\\
m_{uv}=m_{vu}=m^*_{uv}/\sqrt{2}, & \textrm{ if }u \notin \{v,v'\},\\
m_{uv'}=m_{v'u}=m^*_{uv}/\sqrt{2}, & \textrm{ if }u \notin \{v,v'\},\\
m_{vv}=m_{v'v'}=m_{vv'}=m_{v'v}=0.&
\end{array}
\end{equation}

Suppose that the eigenvector $x^*_i$, where $i \in [n-1]$, is associated with the eigenvalue $\alpha$ of $M^*$. For $u \notin \{v,v'\}$, we have
\begin{eqnarray*}
(Mx_i)_u&=&\sum_{w \in V(G)} m_{uw} x_i(w) = m_{uv} x_i(v)+ m_{uv'} x_i(v')+\sum_{w \notin \{v,v'\}} m_{uw} x_i(w)\\
&=& \frac{m^*_{uv}}{\sqrt{2}} \cdot \frac{x^*_i(v)}{\sqrt{2}} +\frac{m^*_{uv}}{\sqrt{2}} \cdot \frac{x^*_i(v)}{\sqrt{2}} + \sum_{w \notin \{v,v'\}} m^*_{uw} x^*_i(w)\\
&=& \sum_{w \in V(G^*)} m^*_{uw} x^*_i(w) = \alpha x^*_i(u) = \alpha x_i(u).
\end{eqnarray*}
We also have
\begin{eqnarray*}
(Mx_i)_v&=&\sum_{w \in V(G)} m_{vw} x_i(w) = m_{vv} x_i(v)+ m_{vv'} x_i(v')+\sum_{w \notin \{v,v'\}} m_{vw} x_i(w)\\
&=& 0+0+\sum_{w \notin \{v,v'\}} \frac{m^*_{vw}}{\sqrt{2}} \cdot x^*_i(w)=0 \cdot \frac{x^*_i(v)}{\sqrt{2}}+ \sum_{w \notin \{v,v'\}} \frac{m^*_{vw}}{\sqrt{2}} \cdot x^*_i(w)\\
&=& \frac{1}{\sqrt{2}} \sum_{w \in V(G^*)} m^*_{vw} x^*_i(w) =  \frac{1}{\sqrt{2}} \cdot ( \alpha x^*_i(v) ) = \alpha x_i(v).
\end{eqnarray*}
Similarly, $(Mx_i)_{v'}=\alpha x_i(v')$, so that $x_i$ is an eigenvector of $M$ associated with $\alpha \in \{-\lambda,0,\lambda,2\lambda\}$.

Regarding the vector $x_n$, for $u \neq \{v,v'\}$, we have
\begin{eqnarray*}
(Mx_n)_u&=&\sum_{w \in V(G)} m_{uw} x_n(w) = m_{uv} x_n(v)+ m_{uv'} x_n(v')+\sum_{w \notin \{v,v'\}} m_{uw} x_n(w)\\
&=& \frac{m^*_{uv}}{\sqrt{2}} \cdot \frac{1}{\sqrt{2}} +\frac{m^*_{uv}}{\sqrt{2}} \cdot \frac{-1}{\sqrt{2}} + \sum_{w \notin \{v,v'\}} m^*_{uw} \cdot 0=0.
\end{eqnarray*}
We also have
\begin{eqnarray*}
(Mx_n)_v&=&\sum_{w \in V(G)} m_{vw} x_n(w) = m_{vv} x_n(v)+ m_{vv'} x_n(v')+\sum_{w \notin \{v,v'\}} m_{vw} x_n(w)\\
&=& 0+0+ \sum_{w \notin \{v,v'\}} \frac{m^*_{vw}}{\sqrt{2}} \cdot 0=0.
\end{eqnarray*}
Similarly, $(Mx_n)_{v'}=0$, so that $x_n$ is an eigenvector of $M$ associated with the eigenvalue 0. 

As a consequence, $x_1,\ldots,x_{n}$ are indeed a basis of $\mathbb{R}^n$ consisting of eigenvalues of $M$. Any eigenvalue of $M$ is equal to an eigenvalue of $M^*$ or to 0, so it lies in $\{-\lambda,0,\lambda,2\lambda\}$, as required.

\vspace{5pt}

\noindent \textbf{Case 2:} $v$ and $v'$ are false twins and $m^*_{vv}=\lambda$

\vspace{5pt}

Define $M=(m_{uw})$ as follows:
\begin{equation}\label{case2}
\begin{array}{ll}
m_{uw}=m^*_{uw}, & \textrm{ if }\{u,w\} \cap \{v,v'\}=\emptyset,\\
m_{uv}=m_{vu}=m^*_{uv}/\sqrt{2}, & \textrm{ if }u \notin \{v,v'\},\\
m_{uv'}=m_{v'u}=m^*_{uv}/\sqrt{2}, & \textrm{ if }u \notin \{v,v'\},\\
m_{vv}=m_{v'v'}=\lambda,m_{v'v}=m_{vv'}=0.&
\end{array}
\end{equation}

Suppose that the eigenvector $x^*_i$, where $i \in [n-1]$, is associated with the eigenvalue $\alpha$ of $M^*$. For $u \notin \{v,v'\}$, we have
\begin{eqnarray*}
(Mx_i)_u&=&\sum_{w \in V(G)} m_{uw} x_i(w) = m_{uv} x_i(v)+ m_{uv'} x_i(v')+\sum_{w \notin \{v,v'\}} m_{uw} x_i(w)\\
&=& \frac{m^*_{uv}}{\sqrt{2}} \cdot \frac{x^*_i(v)}{\sqrt{2}} +\frac{m^*_{uv}}{\sqrt{2}} \cdot \frac{x^*_i(v)}{\sqrt{2}} + \sum_{w \notin \{v,v'\}} m^*_{uw} x^*_i(w)\\
&=& \sum_{w \in V(G^*)} m^*_{uw} x^*_i(w) = \alpha x^*_i(u) = \alpha x_i(u).
\end{eqnarray*}
We also have
\begin{eqnarray*}
(Mx_i)_v&=&\sum_{w \in V(G)} m_{vw} x_i(w) = m_{vv} x_i(v)+ m_{vv'} x_i(v')+\sum_{w \notin \{v,v'\}} m_{vw} x_i(w)\\
&=& \lambda \cdot \frac{x^*_i(v)}{\sqrt{2}}+0+\sum_{w \notin \{v,v'\}} \frac{m^*_{vw}}{\sqrt{2}} \cdot x^*_i(w)=m^*_{vv} \cdot \frac{x^*_i(v)}{\sqrt{2}}+ \sum_{w \notin \{v,v'\}} \frac{m^*_{vw}}{\sqrt{2}} \cdot x^*_i(w)\\
&=& \frac{1}{\sqrt{2}} \sum_{w \in V(G^*)} m^*_{vw} x^*_i(w) =  \frac{1}{\sqrt{2}} \cdot ( \alpha x^*_i(v) ) = \alpha x_i(v).
\end{eqnarray*}
Similarly, $(Mx_i)_{v'}=\alpha x_i(v')$, so that $x_i$ is an eigenvector of $M$ associated with $\alpha \in \{-\lambda,0,\lambda,2\lambda\}$.

Regarding the vector $x_n$, for $u \neq \{v,v'\}$, we have
\begin{eqnarray*}
(Mx_n)_u&=&\sum_{w \in V(G)} m_{uw} x_n(w) = m_{uv} x_n(v)+ m_{uv'} x_n(v')+\sum_{w \notin \{v,v'\}} m_{uw} x_n(w)\\
&=& \frac{m^*_{uv}}{\sqrt{2}} \cdot \frac{1}{\sqrt{2}} +\frac{m^*_{uv}}{\sqrt{2}} \cdot \frac{-1}{\sqrt{2}} + \sum_{w \notin \{v,v'\}} m^*_{uw} \cdot 0=0 = \lambda \cdot x_n(u).
\end{eqnarray*}
We also have
\begin{eqnarray*}
(Mx_n)_v&=&\sum_{w \in V(G)} m_{vw} x_n(w) = m_{vv} x_n(v)+ m_{vv'} x_n(v')+\sum_{w \notin \{v,v'\}} m_{vw} x_n(w)\\
&=& \lambda \cdot \frac{1}{\sqrt{2}} +0+ \sum_{w \notin \{v,v'\}} \frac{m^*_{vw}}{\sqrt{2}} \cdot 0=\lambda x_n(v),\\
(Mx_n)_{v'}&=&\sum_{w \in V(G)} m_{v'w} x_n(w) = m_{v'v'} x_n(v')+ m_{v'v} x_n(v)+\sum_{w \notin \{v,v'\}} m_{vw} x_n(w)\\
&=& \lambda \cdot \frac{-1}{\sqrt{2}} +0+ \sum_{w \notin \{v,v'\}} \frac{m^*_{vw}}{\sqrt{2}} \cdot 0=\lambda x_n(v').
\end{eqnarray*}
Thus $x_n$ is an eigenvector of $M$ associated with the eigenvalue $\lambda$. 

As a consequence, $x_1,\ldots,x_{n}$ are indeed a basis of $\mathbb{R}^n$ consisting of eigenvalues of $M$. Any eigenvalue of $M$ is equal to an eigenvalue of $M^*$ or to $\lambda$, so it lies in $\{-\lambda,0,\lambda,2\lambda\}$, as required.

\vspace{5pt}

\noindent \textbf{Case 3:} $v$ and $v'$ are true twins and $m^*_{vv}=0$

\vspace{5pt}

Define $M=(m_{uw})$ as follows:
\begin{equation}\label{case3}
\begin{array}{ll}
m_{uw}=m^*_{uw}, & \textrm{ if }\{u,w\} \cap \{v,v'\}=\emptyset,\\
m_{uv}=m_{vu}=m^*_{uv}/\sqrt{2}, & \textrm{ if }u \notin \{v,v'\},\\
m_{uv'}=m_{v'u}=m^*_{uv}/\sqrt{2}, & \textrm{ if }u \notin \{v,v'\},\\
m_{vv}=m_{v'v'}=\lambda,m_{v'v}=m_{vv'}=-\lambda.&
\end{array}
\end{equation}

Suppose that the eigenvector $x^*_i$, where $i \in [n-1]$, is associated with the eigenvalue $\alpha$ of $M^*$. For $u \notin \{v,v'\}$, we have
\begin{eqnarray*}
(Mx_i)_u&=&\sum_{w \in V(G)} m_{uw} x_i(w) = m_{uv} x_i(v)+ m_{uv'} x_i(v')+\sum_{w \notin \{v,v'\}} m_{uw} x_i(w)\\
&=& \frac{m^*_{uv}}{\sqrt{2}} \cdot \frac{x^*_i(v)}{\sqrt{2}} +\frac{m^*_{uv}}{\sqrt{2}} \cdot \frac{x^*_i(v)}{\sqrt{2}} + \sum_{w \notin \{v,v'\}} m^*_{uw} x^*_i(w)\\
&=& \sum_{w \in V(G^*)} m^*_{uw} x^*_i(w) = \alpha x^*_i(u) = \alpha x_i(u).
\end{eqnarray*}
We also have
\begin{eqnarray*}
(Mx_i)_v&=&\sum_{w \in V(G)} m_{vw} x_i(w) = m_{vv} x_i(v)+ m_{vv'} x_i(v')+\sum_{w \notin \{v,v'\}} m_{vw} x_i(w)\\
&=& \lambda \cdot \frac{x^*_i(v)}{\sqrt{2}}-\lambda  \cdot \frac{x^*_i(v)}{\sqrt{2}} +\sum_{w \notin \{v,v'\}} \frac{m^*_{vw}}{\sqrt{2}} \cdot x^*_i(w)=m^*_{vv} \cdot \frac{x^*_i(v)}{\sqrt{2}}+ \sum_{w \notin \{v,v'\}} \frac{m^*_{vw}}{\sqrt{2}} \cdot x^*_i(w)\\
&=& \frac{1}{\sqrt{2}} \sum_{w \in V(G^*)} m^*_{vw} x^*_i(w) =  \frac{1}{\sqrt{2}} \cdot ( \alpha x^*_i(v) ) = \alpha x_i(v).
\end{eqnarray*}
Similarly, $(Mx_i)_{v'}=\alpha x_i(v')$, so that $x_i$ is an eigenvector of $M$ associated with $\alpha \in \{-\lambda,0,\lambda,2\lambda\}$.

Regarding the vector $x_n$, for $u \neq \{v,v'\}$, we have
\begin{eqnarray*}
(Mx_n)_u&=&\sum_{w \in V(G)} m_{uw} x_n(w) = m_{uv} x_n(v)+ m_{uv'} x_n(v')+\sum_{w \notin \{v,v'\}} m_{uw} x_n(w)\\
&=& \frac{m^*_{uv}}{\sqrt{2}} \cdot \frac{1}{\sqrt{2}} +\frac{m^*_{uv}}{\sqrt{2}} \cdot \frac{-1}{\sqrt{2}} + \sum_{w \notin \{v,v'\}} m^*_{uw} \cdot 0=0 = (2\lambda) \cdot x_n(u).
\end{eqnarray*}
We also have
\begin{eqnarray*}
(Mx_n)_v&=&\sum_{w \in V(G)} m_{vw} x_n(w) = m_{vv} x_n(v)+ m_{vv'} x_n(v')+\sum_{w \notin \{v,v'\}} m_{vw} x_n(w)\\
&=& \lambda \cdot \frac{1}{\sqrt{2}} + (-\lambda) \cdot \frac{-1}{\sqrt{2}}+ \sum_{w \notin \{v,v'\}} \frac{m^*_{vw}}{\sqrt{2}} \cdot 0=2\lambda \cdot \frac{1}{\sqrt{2}}=(2\lambda)\cdot x_n(v),\\
(Mx_n)_{v'}&=&\sum_{w \in V(G)} m_{v'w} x_n(w) = m_{v'v'} x_n(v')+ m_{v'v} x_n(v)+\sum_{w \notin \{v,v'\}} m_{vw} x_n(w)\\
&=& \lambda \cdot \frac{-1}{\sqrt{2}} + (-\lambda) \cdot \frac{1}{\sqrt{2}} + \sum_{w \notin \{v,v'\}} \frac{m^*_{vw}}{\sqrt{2}} \cdot 0=2\lambda \cdot \frac{-1}{\sqrt{2}}=(2\lambda) \cdot x_n(v').
\end{eqnarray*}
Thus $x_n$ is an eigenvector of $M$ associated with the eigenvalue $2\lambda$. 

As a consequence, $x_1,\ldots,x_{n}$ are indeed a basis of $\mathbb{R}^n$ consisting of eigenvalues of $M$. Any eigenvalue of $M$ is equal to an eigenvalue of $M^*$ or to $2\lambda$, so it lies in $\{-\lambda,0,\lambda,2\lambda\}$, as required.

\vspace{5pt}

\noindent \textbf{Case 4:} $v$ and $v'$ are true twins and $m^*_{vv}=\lambda$

\vspace{5pt}

Define $M=(m_{uw})$ as follows:
\begin{equation}\label{case4}
\begin{array}{ll}
m_{uw}=m^*_{uw}, & \textrm{ if }\{u,w\} \cap \{v,v'\}=\emptyset,\\
m_{uv}=m_{vu}=m^*_{uv}/\sqrt{2}, & \textrm{ if }u \notin \{v,v'\},\\
m_{uv'}=m_{v'u}=m^*_{uv}/\sqrt{2}, & \textrm{ if }u \notin \{v,v'\},\\
m_{vv}=m_{v'v'}=0,m_{v'v}=m_{vv'}=\lambda.&
\end{array}
\end{equation}

Suppose that the eigenvector $x^*_i$, where $i \in [n-1]$, is associated with the eigenvalue $\alpha$ of $M^*$. For $u \notin \{v,v'\}$, we have
\begin{eqnarray*}
(Mx_i)_u&=&\sum_{w \in V(G)} m_{uw} x_i(w) = m_{uv} x_i(v)+ m_{uv'} x_i(v')+\sum_{w \notin \{v,v'\}} m_{uw} x_i(w)\\
&=& \frac{m^*_{uv}}{\sqrt{2}} \cdot \frac{x^*_i(v)}{\sqrt{2}} +\frac{m^*_{uv}}{\sqrt{2}} \cdot \frac{x^*_i(v)}{\sqrt{2}} + \sum_{w \notin \{v,v'\}} m^*_{uw} x^*_i(w)\\
&=& \sum_{w \in V(G^*)} m^*_{uw} x^*_i(w) = \alpha x^*_i(u) = \alpha x_i(u).
\end{eqnarray*}
We also have
\begin{eqnarray*}
(Mx_i)_v&=&\sum_{w \in V(G)} m_{vw} x_i(w) = m_{vv} x_i(v)+ m_{vv'} x_i(v')+\sum_{w \notin \{v,v'\}} m_{vw} x_i(w)\\
&=& 0 \cdot \frac{x^*_i(v)}{\sqrt{2}} + \lambda  \cdot \frac{x^*_i(v)}{\sqrt{2}} +\sum_{w \notin \{v,v'\}} \frac{m^*_{vw}}{\sqrt{2}} \cdot x^*_i(w)=m^*_{vv} \cdot \frac{x^*_i(v)}{\sqrt{2}}+ \sum_{w \notin \{v,v'\}} \frac{m^*_{vw}}{\sqrt{2}} \cdot x^*_i(w)\\
&=& \frac{1}{\sqrt{2}} \sum_{w \in V(G^*)} m^*_{vw} x^*_i(w) =  \frac{1}{\sqrt{2}} \cdot ( \alpha x^*_i(v) ) = \alpha x_i(v).
\end{eqnarray*}
Similarly, $(Mx_i)_{v'}=\alpha x_i(v')$, so that $x_i$ is an eigenvector of $M$ associated with $\alpha \in \{-\lambda,0,\lambda,2\lambda\}$.

Regarding the vector $x_n$, for $u \neq \{v,v'\}$, we have
\begin{eqnarray*}
(Mx_n)_u&=&\sum_{w \in V(G)} m_{uw} x_n(w) = m_{uv} x_n(v)+ m_{uv'} x_n(v')+\sum_{w \notin \{v,v'\}} m_{uw} x_n(w)\\
&=& \frac{m^*_{uv}}{\sqrt{2}} \cdot \frac{1}{\sqrt{2}} +\frac{m^*_{uv}}{\sqrt{2}} \cdot \frac{-1}{\sqrt{2}} + \sum_{w \notin \{v,v'\}} m^*_{uw} \cdot 0=0 = (-\lambda) \cdot x_n(u).
\end{eqnarray*}
We also have
\begin{eqnarray*}
(Mx_n)_v&=&\sum_{w \in V(G)} m_{vw} x_n(w) = m_{vv} x_n(v)+ m_{vv'} x_n(v')+\sum_{w \notin \{v,v'\}} m_{vw} x_n(w)\\
&=& 0 \cdot \frac{1}{\sqrt{2}} + (\lambda) \cdot \frac{-1}{\sqrt{2}}+ \sum_{w \notin \{v,v'\}} \frac{m^*_{vw}}{\sqrt{2}} \cdot 0=-\lambda \cdot \frac{1}{\sqrt{2}}=(-\lambda)\cdot x_n(v),\\
(Mx_n)_{v'}&=&\sum_{w \in V(G)} m_{v'w} x_n(w) = m_{v'v'} x_n(v')+ m_{v'v} x_n(v)+\sum_{w \notin \{v,v'\}} m_{vw} x_n(w)\\
&=& 0 \cdot \frac{-1}{\sqrt{2}} + (\lambda) \cdot \frac{1}{\sqrt{2}} + \sum_{w \notin \{v,v'\}} \frac{m^*_{vw}}{\sqrt{2}} \cdot 0=-\lambda \cdot \frac{-1}{\sqrt{2}}=(-\lambda) \cdot x_n(v').
\end{eqnarray*}
Thus $x_n$ is an eigenvector of $M$ associated with the eigenvalue $-\lambda$. 

As a consequence, $x_1,\ldots,x_{n}$ are indeed a basis of $\mathbb{R}^n$ consisting of eigenvalues of $M$. Any eigenvalue of $M$ is equal to an eigenvalue of $M^*$ or to $-\lambda$, so it lies in $\{-\lambda,0,\lambda,2\lambda\}$, as required.

\end{proof}

\bibliographystyle{plain}
\bibliography{mybibliography.bib}

\end{document}